\numberwithin{equation}{section}
\newtheorem{remark}{Remark}
\newtheorem{theorem}{Theorem}
\newtheorem{cor}{Corollary}
\begin{document}

\baselineskip 18pt

\newcommand{\E}{\mathbb{E}}
\newcommand{\Eof}[1]{\mathbb{E}\left[ #1 \right]}
\newcommand{\Pof}[1]{\mathbb{P}\left[ #1 \right]}
\renewcommand{\H}{\mathbb{H}}
\newcommand{\R}{\mathbb{R}}
\newcommand{\sigl}{\sigma_L}
\newcommand{\BS}{\rm BS}
\newcommand{\p}{\partial}
\renewcommand{\P}{\mathbb{P}}
\newcommand{\var}{{\rm var}}
\newcommand{\cov}{{\rm cov}}
\newcommand{\beaa}{\begin{eqnarray*}}
\newcommand{\eeaa}{\end{eqnarray*}}
\newcommand{\bea}{\begin{eqnarray}}
\newcommand{\eea}{\end{eqnarray}}
\newcommand{\ben}{\begin{enumerate}}
\newcommand{\een}{\end{enumerate}}
\newcommand{\bit}{\begin{itemize}}
\newcommand{\eit}{\end{itemize}}

\newcommand{\bX}{\boldsymbol{X}}
\newcommand{\bx}{\mathbf{x}}
\newcommand{\bE}{\mathbf{e}}
\newcommand{\bw}{\mathbf{w}}
\newcommand{\bW}{\boldsymbol{W}}
\newcommand{\bB}{\boldsymbol{B}}
\newcommand{\bZ}{\boldsymbol{Z}}
\newcommand{\bH}{\mathbf{H}}
\newcommand{\bF}{\mathbf{F}}
\newcommand{\bG}{\mathbf{G}}
\newcommand{\bs}{\mathbf{s}}
\newcommand{\bsihat}{\widehat{\bs_i}}

\newcommand{\cL}{\mathcal{L}}

\newcommand{\mt}{\mathbf{t}}
\newcommand{\mS}{\mathcal{S}}

\newcommand{\argmax}{{\rm argmax}}
\newcommand{\argmin}{{\rm argmin}}

\newcommand{\tM}{\widetilde{M}}
\newcommand{\tE}{\tilde{\mathbb{E}}}
\newcommand{\tEof}[1]{\tilde{\mathbb{E}}\left[ #1 \right]}
\newcommand{\tP}{\tilde{\mathbb{P}}}
\newcommand{\tW}{\tilde{W}}
\newcommand{\1}{\mathbf{1}}
\renewcommand{\O}{\mathcal{O}}
\newcommand{\dt}{\Delta t}
\newcommand{\tr}{{\rm tr}}

\newcommand{\Xv}{X^{(v)}}
\newcommand{\Xvs}{X^{(v^*)}}
\newcommand{\Jv}{J^{(v)}}

\newcommand{\cG}{\mathcal{G}}
\newcommand{\cF}{\mathcal{F}}
\newcommand{\cLv}{\mathcal{L}^{(v)}}

\allowdisplaybreaks


\title[Bessel bridge representation hyperbolic heat kernel]
{Bessel bridge representation for heat kernel in hyperbolic space}
\begin{abstract}
This article shows a Bessel bridge representation for the transition density of Brownian motion on the Poincar\'e space. This transition density is also referred to as the heat kernel on the hyperbolic space in differential geometry literature. The representation recovers the well-known closed form expression for the heat kernel on hyperbolic space in dimension three. However, the newly derived bridge representation is different from the McKean kernel in dimension two and from the Gruet's formula in higher dimensions. The methodology is also applicable to the derivation of an analogous Bessel bridge representations for heat kernel on a Cartan-Hadamard radially symmetric space and for the transition density of hyperbolic Bessel process.
\end{abstract}

\author[X. Cheng and T.-H. Wang]
{Xue Cheng and Tai-Ho Wang}

\address{Xue Cheng \newline
LMEQF, Department of Financial Mathematics \newline
School of Mathematical Sciences, Peking University \newline
Beijing, China
}
\email{chengxue@pku.edu.cn}

\address{Tai-Ho Wang \newline
Department of Mathematics \newline
Baruch College, The City University of New York \newline
1 Bernard Baruch Way, New York, NY10010
}
\email{tai-ho.wang@baruch.cuny.edu}

\maketitle

%
%

\section{Introduction}

Heat kernel, also known as the fundamental solution for heat operator, plays a crucial role in various branches of mathematics including analysis, differential geometry, and probability theory.
On Euclidean spaces, heat kernels have closed form expression given by the Gaussian kernels, which also serve as the transition density of Euclidean Brownian motions. Deriving, to some extent, analytical expression of heat kernel on general curved space is more involved if not completely impossible. Symmetry of the underlying space plays an important role. Hyperbolic space is one of the symmetry spaces with constant negative curvature that has an expression for heat kernel in analytic form. As we shall see throughout the article, due to symmetry, expressions for heat kernels on hyperbolic space depend solely on geodesic distance. 

Derivations of heat kernel on hyperbolic space in closed or quasi closed forms have been done by various authors. We list notable a few as follows. McKean in \cite{mckean} presented a quasi closed form expression (up to an integral), nowadays known as the McKean kernel, for heat kernel on two dimensional hyperbolic space, see \eqref{eqn:mckean} below. A detailed derivation of the McKean kernel using Fourier transform, isometries, and eigenvalues and eigenfunctions of the Laplace-Beltrami operator can be found in \cite{chavel} (Section 2 in Chapter X). The closed form expression for heat kernel on three dimensional hyperbolic space, see \eqref{eqn:hyp-ht-kern-3d} below, and the Millson's recursion formula for higher dimensional hyperbolic heat kernel were reported in \cite{debiard-gaveau-mazet}. A different proof of Millson's recursion formula based on the relationship between the heat kernel and the wave kernel and the explicit formula for wave kernel on symmetry space was given in \cite{grigoryan}. The following expression obtained in \cite{gruet} for the $n$-dimensional hyperbolic heat kernel is known as the Gruet's formula 
\beaa
p_{\H^n}(t,z,w) = \frac{e^{-(n-1)^2t/8}}{\pi (2\pi)^{n/2}\sqrt t} \Gamma\left( \frac{n+1}2 \right) \int_0^\infty \frac{e^{(\pi^2 - b^2)/2t} \sinh(b) \sin(\pi b/t)}{[\cosh(b) + \cosh(r)]^{(n+1)/2}} db,
\eeaa
where $r = d(z,w)$ is the geodesic distance between $z, w \in \H^n$. We refer the interested reader to \cite{matsumoto-yor} for a derivation of the Gruet's formula and its relationship to the pricing of Asian options. A probabilistic approach, which is different from the one employed in the current article, of deriving the heat kernel on two dimensional hyperbolic space can also be found in \cite{ikeda-matsumoto}. As closed form expression is concerned, \cite{matsumoto} obtained  expressions for heat kernels on symmetric spaces of rank 1. Finally, it is worth  mentioning that a nice application of the McKean kernel in quantitative finance can be found in \cite{hlw}.

In this article, we prove yet another representation for heat kernel on hyperbolic space: the Bessel bridge representation in Theorem \ref{thm:bessel-bridge}. By working under geodesic polar coordinates, Brownian motion in hyperbolic space is decomposed into a one dimensional process in the radial part and a process on the unit sphere of codimension one. The radial part, also known as the hyperbolic Bessel process, is indeed a Brownian motion with drift. Due to symmetry of hyperbolic space, the drift in the radial part depends only on the geodesic distance. Girsanov's theorem allows us to define an equivalent probability measure on the underlying probability space through a Radon-Nikodym derivative so that, in the new probability measure, the radial process becomes a Bessel process of order $n$. We then substitute the stochastic integral that results from the Radon-Nikodym derivative with a Riemann integral by applying Ito's formula. The bridge representation of hyperbolic heat kernel is thus obtained by conditioning on the terminal point of the radial process in the new probability measure. The whole procedure is implemented in the proof of Theorem \ref{thm:bessel-bridge}. Similar representation for the transition density of hyperbolic Bessel process is shown in Theorem \ref{thm:hyperbolic-bessel}. With minor modifications, the same procedure is also applicable to the case of general Cartan-Hadamard radially symmetric spaces and the result is summarized in Theorem \ref{thm:bessel-bridge-radial-sym}. We remark that, in the one dimensional case, the idea of bridge representation for transition density of a diffusion first appeared, to our knowledge, in \cite{rogers}, see also \cite{w-gatheral} for further discussions. 

%
%

\section{Heat kernel on hyperbolic space}
Throughout the text, stochastic processes and random variables are assumed defined on the complete filtered probability space $(\Omega,\cF, \P, \{\cF_t\}_{t\in[0,\infty)})$ satisfying the usual conditions. We shall denote the $n$-dimensional hyperbolic space by $\H^n$ and the associated heat kernel on $\H^n$ between $z, w \in \H^n$ at time $t$ by $p_{\H^n}(t,z,w)$. 

\subsection{The hyperbolic space}
Conventionally, hyperbolic spaces are parametrized by two isometrically equivalently models: the half-space model and the ball model. 
The underlying space in the half-space model is the half plane $\mathbb{R}^n_+ = \{(x_1, \cdots, x_n) : x_n > 0 \}$, while the underlying space in the ball model is the open ball $\mathbb{B}_n = \{ y = (y_1, \cdots, y_n) : \|y\| < 1 \}$. The transformation between the two models can be found for instance in \cite{chavel} (p.264). In particular, for $n=2$, the transformation between the two models is given by the M\"obius transform, $T: \mathbb{C}_+ \to \mathbb{B}_2$,
\[
  w = T(z) = \frac{z- i}{z+i}.
\]
In the half-space model, the metric $ds^2$ and the Laplace-Beltrami are given respectively by 
\beaa
  && ds^2 = \frac{dx_1^2 + \cdots + dx_n^2}{x_n^2}, \\
  && \Delta_M = x_n^2 \left( \p_{x_1}^2 + \cdots + \p_{x_n}^2 \right) + (2-n) x_n \p_n;
\eeaa
whereas in the ball model, the metric is given, in polar coordinates $(\rho, \theta)$, $\theta \in S^{n-1}$, by 
\beaa
  && ds^2 = \frac4{(1 - \rho^2)^2} \left( d\rho^2 + \rho^2 d\theta^2 \right), \\
  && \Delta_M = \frac{(1 - \rho^2)^2}4 \left( \p_\rho^2 + \frac1\rho \p_\rho + \frac1{\rho^2} \Delta_{S^{n-1}} \right),
\eeaa
where $d\theta^2$ is the Riemann metric and $\Delta_{S^{n-1}}$ the Laplace-Beltrami operator on the standard unit sphere $S^{n-1}$.
Moreover, if we make the transformation $\rho = \tanh\left( \frac r2 \right)$, then $(r,\theta)$ becomes the geodesic polar coordinates for $\H^n$.
We shall be working primarily in the geodesic polar coordinates $(r,\theta) \in [0,\infty) \times S^{n-1}$ under which the Riemann metric and the Laplace-Beltrami operator of $\H^n$ are given respectively as
\beaa
  && ds^2 = dr^2 + \sinh^2 r d\theta^2, \\
  && \Delta_{\H^n} = \p_r^2 + (n-1) \coth r \p_r + \frac1{\sinh^2r} \Delta_{S^{n-1}}.
\eeaa
We remark that the geodesic polar coordinates on $\H^n$ is a global diffeomorphism, henceforth defines a global coordinates, since hyperbolic space is Cartan-Hadamard.

\subsection{The heat kernel}
Generally speaking, heat kernel on a differentiable manifold $M$ is a fundamental solution to the (probabilist's) heat operator $\p_t - \frac12 \Delta_M$, where $\Delta_M$ is the Laplace-Beltrami operator on $M$. The minimal heat kernel also serves as the transition density of Brownian motion on $M$. 
We refer the reader to \cite{eltonbook} for expositions of Brownian motions on manifolds and their relationships to heat kernel. 
For reader's reference, we reproduce the heat kernel on the two and three dimensional hyperbolic spaces in the following.
\bea
 p_{\H^2}(z,w,t) = \frac{\sqrt2 e^{-t/8}}{(2\pi t)^{3/2}} \int_{d(z,w)}^\infty \frac{\xi e^{-\frac{\xi^2}{2t}}}{\sqrt{\cosh\xi - \cosh d(z,w)}} d\xi \label{eqn:mckean}
\eea
and
\begin{equation}
 p_{\H^3}(z,w,t) = \frac{e^{-\frac{r^2}{2t}}}{(2\pi t)^{3/2}} e^{-\frac{t}2} \frac r{\sinh(r)}, \label{eqn:hyp-ht-kern-3d}
\end{equation}
where $r = d(z,w)$ is the geodesic distance between $z$ and $w$. Note that the heat kernels given in \eqref{eqn:mckean} and \eqref{eqn:hyp-ht-kern-3d} are densities with respect to the volume form.

In the following, we apply Girsanov's theorem to derive an expression for the heat kernel over $\H^n$, for $n \geq 2$, in which the closed form expression (\ref{eqn:hyp-ht-kern-3d}) for $\H^3$ is recovered. 
Note that since the Laplace-Beltrami operator on $\H^n$ is rotationally invariant, the heat kernel, or equivalently the transition density for the Brownian motion on hyperbolic space, is also rotationally invariant, hence a radial function.

Consider the processes $(R_t, \Theta_t)$ governed by the SDEs
\bea
  && dR_t = dW_t + \frac{n-1}2 \coth(R_t)\, dt, \label{eqn: R-process} \\
  && d\Theta_t = \frac1{\sinh(R_t)} dZ_t \label{eqn: theta-process},
\eea
where $W_t$ is a standard one dimensional Brownian motion and $Z_t$ is a Brownian motion on the standard sphere $S^{n-1}$, independent of $W_t$. The infinitesimal generator of the process $(R_t,\Theta_t)$ is $\frac12 \Delta_{\H^n}$. Thus, it represents a Brownian motion on $\H^n$ in geodesic polar coordinates. We set the initial condition $\Theta_0$ to be a random variable uniformly distributed on $S^{n-1}$ so that the distribution of  $\Theta_t$ remains uniformly distributed on $S^{n-1}$ for all $t$. The main result of the article is given in the following theorem. 

\begin{theorem}(Bessel bridge representation) \label{thm:bessel-bridge} \\ 
Let $z, w \in \H^n$. The heat kernel $p_{\H^n}(T,z,w)$ on the hyperbolic space $\H^n$ has the following representation
\bea
p_{\H^n}(T,z,w) = e^{- \frac{(n-1)^2T}8} \left(\frac{r}{\sinh r} \right)^{\frac{n-1}2} \frac{e^{-\frac{r^2}{2T}}}{(2\pi T)^{\frac n2}} \tE_{r} \left[ e^{- \frac{(n-1)(n-3)}8 \int_0^T \left[\frac1{\sinh^2(R_t)} - \frac1{R_t^2} \right] dt} \right], \label{eqn:bessel-bridge}
\eea
where $r = r(z,w)$ is the geodesic distance between $z$ and $w$. $\tE_r[\cdot]$ denotes the conditional expectation $\tE[\cdot|R_T=r]$, where $R_t$ is a Bessel process of order $n$ in the $\tP$-measure. 
\end{theorem}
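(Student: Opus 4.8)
The plan is to reduce the computation of the heat kernel to a one-dimensional problem on the radial process and then to transport the law of that process to the law of a Bessel process by a Girsanov change of measure. Since $\Delta_{\H^n}$ is rotationally invariant and $\Theta_t$ stays uniformly distributed on $S^{n-1}$, I may place $z$ at the pole of the geodesic polar coordinates, so that $p_{\H^n}(T,z,w)$ depends only on $r=r(z,w)$ and is recovered from the law of $R_T$ started at $R_0=0$. Because the volume element in geodesic polar coordinates is $\sinh^{n-1}r\,dr\,d\theta$, the density of $R_T$ with respect to $dr$ equals $\omega_{n-1}\sinh^{n-1}(r)\,p_{\H^n}(T,z,w)$, where $\omega_{n-1}$ is the surface area of $S^{n-1}$; inverting this identity expresses $p_{\H^n}$ in terms of the radial transition density.

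Next I would introduce, via Girsanov's theorem, an equivalent measure $\tP$ under which $R$ solves $dR_t = d\tW_t + \frac{n-1}{2R_t}\,dt$, that is, $R$ becomes a Bessel process of order $n$. Writing $\varphi(r)=\frac{n-1}{2}\left(\coth r - \frac1r\right)$ for the difference of the two drifts, the associated Radon--Nikodym derivative is $\frac{d\P}{d\tP}\big|_{\cF_T} = \exp\left(\int_0^T \varphi(R_t)\,d\tW_t - \frac12\int_0^T \varphi(R_t)^2\,dt\right)$. Under $\tP$ the terminal law of $R_T$ is the explicit Bessel density $\frac{\omega_{n-1}\,r^{n-1}}{(2\pi T)^{n/2}}e^{-r^2/(2T)}$ with respect to $dr$, and a disintegration over the value of $R_T$ yields $p_{\H^n}(T,z,w) = \left(\frac{r}{\sinh r}\right)^{n-1}\frac{e^{-r^2/(2T)}}{(2\pi T)^{n/2}}\,\tE_r\!\left[\frac{d\P}{d\tP}\right]$.

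The crux is to turn the stochastic integral in the Radon--Nikodym derivative into a path-independent boundary term plus a Riemann integral. Observing that $\varphi=F'$ for $F(r)=\frac{n-1}{2}\ln\frac{\sinh r}{r}$ (with $F(0)=0$ since $\sinh r/r\to 1$), Ito's formula under $\tP$ gives $\int_0^T \varphi(R_t)\,d\tW_t = F(R_T) - \int_0^T\left(\frac{n-1}{2R_t}\varphi(R_t) + \frac12 F''(R_t)\right)dt$. Substituting this, the exponent collapses to $F(R_T)$ minus $\int_0^T g(R_t)\,dt$ with $g=\frac{n-1}{2r}\varphi + \frac12 F'' + \frac12\varphi^2$; a direct calculation using $\coth^2 r = 1 + \sinh^{-2}r$ shows that the $\coth r/r$ terms cancel and $g$ reduces to the constant $\frac{(n-1)^2}{8}$ plus $\frac{(n-1)(n-3)}{8}\left(\frac{1}{\sinh^2 r}-\frac{1}{r^2}\right)$. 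Conditioning on $R_T=r$ turns $e^{F(R_T)}$ into $\left(\frac{\sinh r}{r}\right)^{(n-1)/2}$, and combining this with the prefactor $\left(\frac{r}{\sinh r}\right)^{n-1}$ produces $\left(\frac{r}{\sinh r}\right)^{(n-1)/2}$, reproducing \eqref{eqn:bessel-bridge}. As a check, at $n=3$ the integral term vanishes and the expression collapses to \eqref{eqn:hyp-ht-kern-3d}.

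I expect the main obstacle to be the rigorous justification of the change of measure and of the conditioning, rather than the algebra. One must verify that $\frac{d\P}{d\tP}$ is a genuine density (for instance through a Novikov-type condition), paying attention to the singularity of the two drifts at the origin where the process is started; the saving grace is that $\varphi(r)=\frac{n-1}{2}(\coth r - 1/r)$ and the integrand $\frac{1}{\sinh^2 r}-\frac{1}{r^2}$ both remain bounded as $r\to 0$, so no pathology arises from the pole and the order-$n$ Bessel process ($n\geq 2$) never returns to $0$. One also has to make the disintegration over $R_T=r$ precise, i.e. to identify $\tE_r[\cdot]$ with expectation under the law of the Bessel bridge of order $n$ from $0$ to $r$ on $[0,T]$; this is where the regularity and positivity of the Bessel transition density are used.
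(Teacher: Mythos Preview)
Your proposal is correct and follows essentially the same route as the paper: a Girsanov change of measure that turns the hyperbolic radial process into a Bessel process of order $n$, followed by It\^o's formula applied to the antiderivative of the drift difference to convert the stochastic integral into a Riemann integral, and finally disintegration over $R_T=r$. The only cosmetic differences are sign conventions (your $\varphi=-h$, your $F=-H$) and that the paper carries along a test function $f$ where you work directly with densities; your identification of the technical points (boundedness of $\varphi$ for Novikov, regularity of the Bessel bridge law) matches what the paper invokes.
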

\begin{proof}
Let $(R_t, \Theta_t)$ be the process satisfying \eqref{eqn: R-process}:\eqref{eqn: theta-process} with initial conditions $R_0=0$ and $\Theta_0$ being uniformly distributed on $S^{n-1}$. 
We start with calculating the expectation of an arbitrary bounded measurable radial function $f$ as 
\beaa
  && \E[f(R_T)] = \int_{S^{n-1}}\int_0^\infty f(r) p(T,r) \sinh^{n-1} r dr d\omega,
\eeaa
where $d\omega$ is the volume form on $S^{n-1}$ and $p(T,r) = p_{\H^n}(T,z,w)$, $r = r(z,w)$ denotes the geodesic distance between $z$ and $w$. For the radial process $R_t$,
define the new measure $\tP$ by the Radon-Nikodym derivative
\begin{equation}
  \frac{d\tP}{d\P} = e^{\int_0^T h(R_t) dW_t - \frac12 \int_0^T h^2(R_t)dt}, \label{eqn:RN-r}
\end{equation}
where $h(r) = \frac{n-1}2\left[\frac1r - \coth(r)\right]$. Note that $h$ is a bounded function, in fact, $|h(r)|\leq \frac{n-1}2$ for all $r\geq 0$. Thus (\ref{eqn:RN-r}) is a well-defined change of probability measure. 
Therefore, Girsanov's theorem implies that, under the measure $\tP$, $W_t$ is a Brownian motion with drift $h$.
Moreover, in the $\tP$-measure, the SDE for the radial process $R_t$ becomes
\[
  dR_t = d\tW_t + \frac{n-1}2\, \frac{dt}{R_t},
\]
which is a Bessel process of order $n$. Therefore, we have
\bea
  && \E[f(R_T)] = \tE\left[ f(R_T) \frac{d\P}{d\tP} \right]
  = \tE\left[ f(R_T) e^{-\int_0^T h(R_t) dW_t + \frac12 \int_0^T h^2(R_t)dt} \right] \nonumber \\
  &=& \tE\left[ f(R_T) e^{-\int_0^T h(R_t) d\tW_t - \frac12 \int_0^T h^2(R_t)dt} \right]. \label{eqn:f-exp-tilde}
\eea
We substitute the stochastic integral in \eqref{eqn:f-exp-tilde} with a Riemann integral by applying Ito's formula as follows. Let $H$ be an antiderivative of $h$, i.e., $H' = h$. Apparently, $H(r) = \frac{n-1}2\,\ln\left(\frac r{\sinh r}\right)$. Then by applying Ito's formula we have
\beaa
  && \int_0^T h(R_t) d\tW_t 
  = H(R_T) - H(0) - \int_0^T \left[ \frac{h'(R_t)}2 + \frac{n-1}2 \frac{h(R_t)}{R_t} \right] dt.
\eeaa
It follows that the exponent of the exponential term in \eqref{eqn:f-exp-tilde} becomes
\beaa
  && -\int_0^T h(R_t) d\tW_t - \frac12\int_0^T h^2(R_t) dt \\
  &=& - H(R_T) + H(0) + \int_0^T \left[ \frac{h'(R_t)}2 + \frac{n-1}2\,\frac{h(R_t)}{R_t} - \frac{h^2(R_t)}2 \right] dt \\
  &=& \ln \left[ \frac{\sinh(R_T)}{R_T} \right]^{\frac{n-1}2} - \frac{(n-1)^2}8 T - \frac{(n-1)(n-3)}8 \int_0^T \left[\frac1{\sinh^2(R_t)} - \frac1{R_t^2} \right] dt.
\eeaa
Hence, we have
\beaa
  && \E[f(R_T)]
  = e^{- \frac{(n-1)^2T}8} \tE\left[ f(R_T) \left\{ \frac{\sinh(R_T)}{R_T} \right\}^{\frac{n-1}2} e^{- \frac{(n-1)(n-3)}8 \int_0^T \left[\frac1{\sinh^2(R_t)} - \frac1{R_t^2} \right] dt} \right].
\eeaa
In particular, when $n=3$, the last expression has a much simpler form:
\beaa
  && \E[f(R_T)] = e^{- \frac{T}2} \tE\left[ f(R_T) \frac{\sinh(R_T)}{R_T}  \right].
\eeaa
Finally, since $R_t$ in $\tP$ measure is a Bessel process of order $n$, we end up with
\beaa
  && \int_{S^{n-1}}\int_0^\infty f(r) p(T,r) \sinh^{n-1} r dr d\omega = \E[f(R_T)] \\
  &=& e^{- \frac{(n-1)^2T}8} \tE\left[ f(R_T) \left\{ \frac{\sinh(R_T)}{R_T} \right\}^{\frac{n-1}2} e^{- \frac{(n-1)(n-3)}8 \int_0^T \left[\frac1{\sinh^2(R_t)} - \frac1{R_t^2} \right] dt} \right] \\
  &=& e^{- \frac{(n-1)^2T}8} \frac{\Gamma\left(\frac n2\right)}{2\pi^{\frac n2}} \int_{S^{n-1}}\int_0^\infty f(r) \left(\frac{\sinh r}{r} \right)^{\frac{n-1}2} \times \\
  && \qquad \tE_{r} \left[ e^{- \frac{(n-1)(n-3)}8 \int_0^T \left[\frac1{\sinh^2(R_t)} - \frac1{R_t^2} \right] dt} \right]
  \frac{2 r^{n-1} e^{-\frac{r^2}{2T}}}{(2T)^{\frac n2}\Gamma\left(\frac n2\right)}  dr d\omega  \\
  &=& e^{- \frac{(n-1)^2T}8} \int_{S^{n-1}}\int_0^\infty f(r) \left(\frac{r}{\sinh r} \right)^{\frac{n-1}2} \frac{e^{-\frac{r^2}{2T}}}{(2\pi T)^{\frac n2}} \times \\
  && \qquad \tE_{r} \left[ e^{- \frac{(n-1)(n-3)}8 \int_0^T \left[\frac1{\sinh^2(R_t)} - \frac1{R_t^2} \right] dt} \right] \sinh^{n-1} r dr d\omega,
\eeaa
where in passing to the penultimate equality we used the probability density $p_B$ of the Bessel process $R_t$ given by
\[
p_B(t,r) = \frac{2 r^{n-1} e^{-\frac{r^2}{2t}}}{(2t)^{\frac n2}\Gamma\left(\frac n2\right)}
\] 
which satisfies the Fokker-Planck equation
\[
\p_t u = \frac12 \p_r^2 u - \frac{n-1}2 \p_r\left( \frac ur\right)
\]
with initial condition $u(r,0) = \delta(r)$, the Dirac delta function centered at $0$. 
Also note that the normalizing constant $\frac{2\pi^{\frac n2}}{\Gamma\left(\frac n2\right)}$ comes from the volume of $S^{n-1}$.
Thus, we obtain the bridge representation \eqref{eqn:bessel-bridge} for the transition density of hyperbolic Brownian motion.
\end{proof}
\begin{remark} $\mbox{ }$ \\
Note that when $n=3$ the representation \eqref{eqn:bessel-bridge} reduces to
\[
  p_{\H^3}(T,z,w) = e^{- \frac{T}2} \frac{r}{\sinh r} \frac{e^{-\frac{r^2}{2T}}}{(2\pi T)^{\frac 32}}
\]
which coincides with the closed form expression \eqref{eqn:hyp-ht-kern-3d}. However, for $n=2$, \eqref{eqn:bessel-bridge} reads
\[
  p_{\H^2}(T,z,w) = e^{- \frac{T}8} \sqrt{\frac{r}{\sinh r}} \frac{e^{-\frac{r^2}{2T}}}{2\pi T} \tE_{r} \left[ e^{\frac18 \int_0^T \left\{\frac1{\sinh^2(R_t)} - \frac1{R_t^2} \right\} dt} \right].
\]
Notice that
\ben
\item This expression is different from the McKean kernel \eqref{eqn:mckean} or the Gruet's formula in the sense that a) the power of $2\pi T$ is in the correct dimension ($\frac n2 = 1$) and b) the ``Gaussian" term $e^{-\frac{r^2}{2T}}$ is factored outfront naturally.

\item The integrand in the exponential term, i.e., the function $\phi(x) := \frac1{\sinh^2 x}- \frac1{x^2}$ is increasing in $[0,\infty)$ with $\lim_{x\to0^+} \phi(x) = -\frac13$ and $\lim_{x\to\infty}\phi(x) = 0$. Therefore, $\phi$ is bounded above by 0 and below by $-\frac13$.
\een
\end{remark}
As applications of the bridge representation \eqref{eqn:bessel-bridge}, a series expansion and an asymptotic expansion in small time for the hyperbolic heat kernel are almost straightforward. For notational simplicity, hereafter in this subsection we shall denote by 
\[
g(r) = - \frac{(n-1)(n-3)}8 \left[ \frac1{\sinh^2(r)} - \frac1{r^2} \right].
\]
Note that $g$ is strictly decreasing and $|g(r)| \leq \frac{(n-1)(n-3)}{24}$ for all $r > 0$.
\begin{cor} \label{cor:series}
The hyperbolic heat kernel $p_{\H^n}$ has the following series expansion
\bea
&& p_{\H^n}(T,z,w) \label{eqn:series-exp} \\
&=& e^{- \frac{(n-1)^2T}8} \left(\frac{r}{\sinh r} \right)^{\frac{n-1}2} \frac{e^{-\frac{r^2}{2T}}}{(2\pi T)^{\frac n2}} e^{\int_0^T g(r_t) dt} \, \sum_{k=0}^\infty \frac{T^k}{k!} \tE_r\left[\left(\int_0^1 g(R_{Ts}) - g(r_{Ts}) ds \right)^k\right], \nonumber
\eea
where $r_t$, for $t \in [0,T]$, is defined by
\bea
r_t = g^{-1} \left(\tE_r[g(R_t)] \right).  \label{eqn:rt} 
\eea
In other words, $g(r_t)$ is an unbiased estimator for $g(R_t)$ in the Bessel bridge measure.
\end{cor}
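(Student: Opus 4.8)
The plan is to manipulate the single conditional expectation appearing in the bridge representation \eqref{eqn:bessel-bridge} of Theorem \ref{thm:bessel-bridge}. Rewriting that expectation in terms of $g$, the representation reads
\[
p_{\H^n}(T,z,w) = e^{- \frac{(n-1)^2T}8} \left(\frac{r}{\sinh r}\right)^{\frac{n-1}2} \frac{e^{-\frac{r^2}{2T}}}{(2\pi T)^{\frac n2}}\, \tE_{r}\left[ e^{\int_0^T g(R_t)\,dt} \right],
\]
so it suffices to establish the identity
\[
\tE_r\left[ e^{\int_0^T g(R_t)\,dt} \right] = e^{\int_0^T g(r_t)\,dt} \sum_{k=0}^\infty \frac{T^k}{k!}\, \tE_r\left[\left(\int_0^1 g(R_{Ts}) - g(r_{Ts})\,ds\right)^k\right].
\]

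First I would check that the deterministic curve $r_t$ in \eqref{eqn:rt} is well defined. Since $g(r) = -\frac{(n-1)(n-3)}8\,\phi(r)$ with $\phi$ strictly monotone on $(0,\infty)$ (the Remark), $g$ is, for $n \neq 3$, a strictly monotone continuous bijection onto its open range; the degenerate case $n=3$ gives $g\equiv 0$, for which \eqref{eqn:series-exp} collapses to the $k=0$ term and reproduces the closed form \eqref{eqn:hyp-ht-kern-3d}. Because the Bessel bridge $R_t$ has a nondegenerate law on $(0,\infty)$ for each $t\in(0,T)$, the average $\tE_r[g(R_t)]$ lies strictly inside the range of $g$, so $g^{-1}\!\left(\tE_r[g(R_t)]\right)$ exists and $r_t$ is unambiguously defined; by construction $g(r_t) = \tE_r[g(R_t)]$, which is exactly the unbiasedness asserted in the statement.

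Next I would split the pathwise integral into its deterministic part and a mean-zero fluctuation,
\[
\int_0^T g(R_t)\,dt = \int_0^T g(r_t)\,dt + \int_0^T \left[ g(R_t) - g(r_t) \right] dt,
\]
and factor the purely deterministic exponential $e^{\int_0^T g(r_t)\,dt}$ out of $\tE_r[\cdot]$. Rescaling time by $t = Ts$ turns the fluctuation integral into $T\int_0^1 \left[g(R_{Ts}) - g(r_{Ts})\right]ds$, giving
\[
\tE_r\left[ e^{\int_0^T g(R_t)\,dt} \right] = e^{\int_0^T g(r_t)\,dt}\, \tE_r\left[ e^{T\int_0^1 [g(R_{Ts}) - g(r_{Ts})]\,ds} \right].
\]
Expanding the remaining exponential in its Taylor series and exchanging the summation with $\tE_r[\cdot]$ then yields \eqref{eqn:series-exp}.

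The only point requiring real care is this interchange of the infinite summation and the expectation. Fortunately $g$ is bounded (as recorded just before the statement); writing $M := \sup_{r>0}|g(r)| < \infty$ we have $\left| \int_0^1 [g(R_{Ts}) - g(r_{Ts})]\,ds \right| \le 2M$ uniformly in the path. Hence the $k$-th summand is bounded in absolute value by $\frac{(2MT)^k}{k!}$, the series of absolute values converges to $e^{2MT}$, and dominated convergence (equivalently Fubini--Tonelli) legitimizes the interchange; the resulting series is in fact absolutely and uniformly convergent. I expect this boundedness bookkeeping, rather than any deeper probabilistic input, to be the crux: the identity itself is an exact algebraic rearrangement valid for \emph{any} deterministic $r_t$, the specific choice \eqref{eqn:rt} being made only to annihilate the $k=1$ term and thereby sharpen the expansion.
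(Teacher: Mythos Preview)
Your proposal is correct and follows essentially the same route as the paper: factor out the deterministic exponential $e^{\int_0^T g(r_t)\,dt}$, Taylor expand the remaining exponential, and justify the interchange of sum and expectation by dominated convergence using the uniform bound $|g|\le \frac{(n-1)(n-3)}{24}$, then change variables $t=Ts$. Your additional care about the well-definedness of $r_t$ and the degenerate case $n=3$ is not in the paper's proof but is welcome rigor.
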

\begin{proof}
It suffices to deal with the conditional expectation term in \eqref{eqn:bessel-bridge}. 
\beaa
&& \tE_{r} \left[ e^{- \frac{(n-1)(n-3)}8 \int_0^T \left\{\frac1{\sinh^2(R_t)} - \frac1{R_t^2} \right\} dt} \right] \\
&=& e^{\int_0^T g(r_t) dt} \tE_{r} \left[ e^{\int_0^T \left\{g(R_t) - g(r_t) \right\} dt} \right] \\
&=& e^{\int_0^T g(r_t) dt} \, \tE_{r} \left[ \sum_{k=0}^\infty \frac1{k!} \left(\int_0^T \left\{g(R_t) - g(r_t)\right\} dt \right)^k  \right] \\
&=& e^{\int_0^T g(r_t) dt} \, \sum_{k=0}^\infty \frac1{k!} \, \tE_{r} \left[\left(\int_0^T \left\{g(R_t) - g(r_t)\right\} dt \right)^k  \right]
\eeaa
by dominating convergence theorem since the random variable $\int_0^T \left\{g(R_t) - g(r_t) \right\} dt$ is bounded. In fact, 
\[
\left|\int_0^T \left\{g(R_t) - g(r_t) \right\} dt \right| \leq \frac{(n-1)(n-3)}{12}T \qquad \mbox{almost surely.}
\]
Finally, by making the change of variable $t = Ts$ we obtain the series expansion \eqref{eqn:series-exp}.
\end{proof}
We remark that in fact we have the freedom of selecting the deterministic path $r_t$ in the series expansion \eqref{eqn:series-exp}. We choose the path as such since it serves as a first order ``unbiased estimator" in the small time asymptotic expansion in the corollary that follows. 

\begin{cor} \label{cor:small-time}
As $T \to 0^+$, the hyperbolic heat kernel $p_{\H^n}$ has the following small time asymptotic expansion up to second order
\bea
&& p_{\H^n}(T,z,w) \label{eqn:small-time-exp} \\
&=& e^{- \frac{(n-1)^2T}8} \left(\frac{r}{\sinh r} \right)^{\frac{n-1}2} \frac{e^{-\frac{r^2}{2T}}}{(2\pi T)^{\frac n2}} e^{\int_0^T g(r_t) dt} \, \left\{ 1 + O(T^2) \right\}, \nonumber
\eea
where $r_t$ is given in \eqref{eqn:rt}.
\end{cor}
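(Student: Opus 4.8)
The plan is to start directly from the series expansion \eqref{eqn:series-exp} established in Corollary \ref{cor:series} and to show that the infinite sum appearing there equals $1 + O(T^2)$ as $T \to 0^+$, so that the prefactor in \eqref{eqn:series-exp} is already the full second-order expansion. Writing $C = \frac{(n-1)(n-3)}{12}$ and $\Delta_s = g(R_{Ts}) - g(r_{Ts})$, the sum in question is $\sum_{k=0}^\infty \frac{T^k}{k!} \tE_r\left[\left(\int_0^1 \Delta_s \, ds\right)^k\right]$. I would isolate the contributions coming from $k=0$, $k=1$, and $k \geq 2$ and treat each separately.

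The term $k=0$ contributes exactly $1$, the leading factor in the claimed expansion. The crucial observation is that the term $k=1$ vanishes identically. Indeed, by the very definition \eqref{eqn:rt} of the deterministic path one has $g(r_t) = \tE_r[g(R_t)]$ for every $t \in [0,T]$, hence $g(r_{Ts}) = \tE_r[g(R_{Ts})]$ for each $s \in [0,1]$. Interchanging the conditional expectation and the time integral via Fubini's theorem then gives $\tE_r\left[\int_0^1 \Delta_s \, ds\right] = \int_0^1 \left(\tE_r[g(R_{Ts})] - g(r_{Ts})\right) ds = 0$. This is precisely the sense in which $g(r_t)$ is a first-order unbiased estimator, and it is the reason for this particular choice of $r_t$: it annihilates the $O(T)$ correction.

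It remains to control the tail $\sum_{k \geq 2}$. Since $g$ is bounded with $|g(r)| \leq \frac{(n-1)(n-3)}{24} = C/2$, we have $|\Delta_s| \leq C$ pointwise, hence $\left|\int_0^1 \Delta_s \, ds\right| \leq C$ almost surely and $\left|\tE_r\left[\left(\int_0^1 \Delta_s \, ds\right)^k\right]\right| \leq C^k$. Therefore $\left|\sum_{k=2}^\infty \frac{T^k}{k!} \tE_r\left[\left(\int_0^1 \Delta_s \, ds\right)^k\right]\right| \leq \sum_{k=2}^\infty \frac{(CT)^k}{k!} = e^{CT} - 1 - CT = O(T^2)$ as $T \to 0^+$. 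Collecting the three contributions, the sum equals $1 + 0 + O(T^2) = 1 + O(T^2)$, and substituting this back into \eqref{eqn:series-exp} yields the asserted expansion \eqref{eqn:small-time-exp}.

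I would expect the argument to present essentially no obstacle, since all the genuine analytic work (the series representation itself, together with the termwise bounds and the dominated convergence justification) was already carried out in Corollary \ref{cor:series}. The only point requiring care is the bookkeeping in the vanishing of the $k=1$ term, where one must invoke the defining property of $r_t$ and swap the conditional expectation past the time integral; the uniform boundedness of $g$ makes this interchange immediate and simultaneously supplies the geometric tail estimate that pins down the $O(T^2)$ remainder.
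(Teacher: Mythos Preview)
Your proposal is correct and follows essentially the same approach as the paper: isolate the $k=0$ term (which equals $1$), use the defining property of $r_t$ to kill the $k=1$ term, and absorb the remaining tail into $O(T^2)$. The paper's proof is terser---it simply writes the series as $1 + T\,\tE_r\!\left[\int_0^1\{g(R_{Tu})-g(r_{Tu})\}\,du\right] + O(T^2)$ and invokes the definition of $r_t$---whereas you make the tail estimate explicit via $\sum_{k\ge 2}\frac{(CT)^k}{k!}=e^{CT}-1-CT$, which is a welcome clarification but not a different argument.
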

\begin{proof}
Consider the infinite series on the right hand side of \eqref{eqn:series-exp},
\beaa
&& \sum_{k=0}^\infty \frac{T^k}{k!} \tE_r\left[\left(\int_0^1 g(R_{Ts}) - g(r_{Ts}) ds \right)^k\right] \\
&=& 1 + T \, \tE_{r} \left[\int_0^1 \left\{g(R_{Tu}) - g(r_{Tu}) \right\} du \right] + O(T^2) \\
&=& 1 + O(T^2)
\eeaa
by the definition of the path $r_t$. 
\end{proof}
Note that if we choose a different path $r_t$ than \eqref{eqn:rt}, then the asymptotic expansion in \eqref{eqn:small-time-exp} is of order $T$ only. 

Lastly, by na\"ively choosing $r_t$ as the straight line connecting 0 and $r$ as well as the unbiased estimator \eqref{eqn:rt}, in Figure \ref{fig:fig1} we illustrate numerically the accuracy of the asymptotic expansion \eqref{eqn:small-time-exp}, compared with the Gruet's formula. As shown in the plots, the unbiased estimator does a pretty decent job; whereas the straight line approximation is off for high dimensions. 
\begin{figure}[ht!]
\begin{center}
\includegraphics[width=10cm, height=10cm]{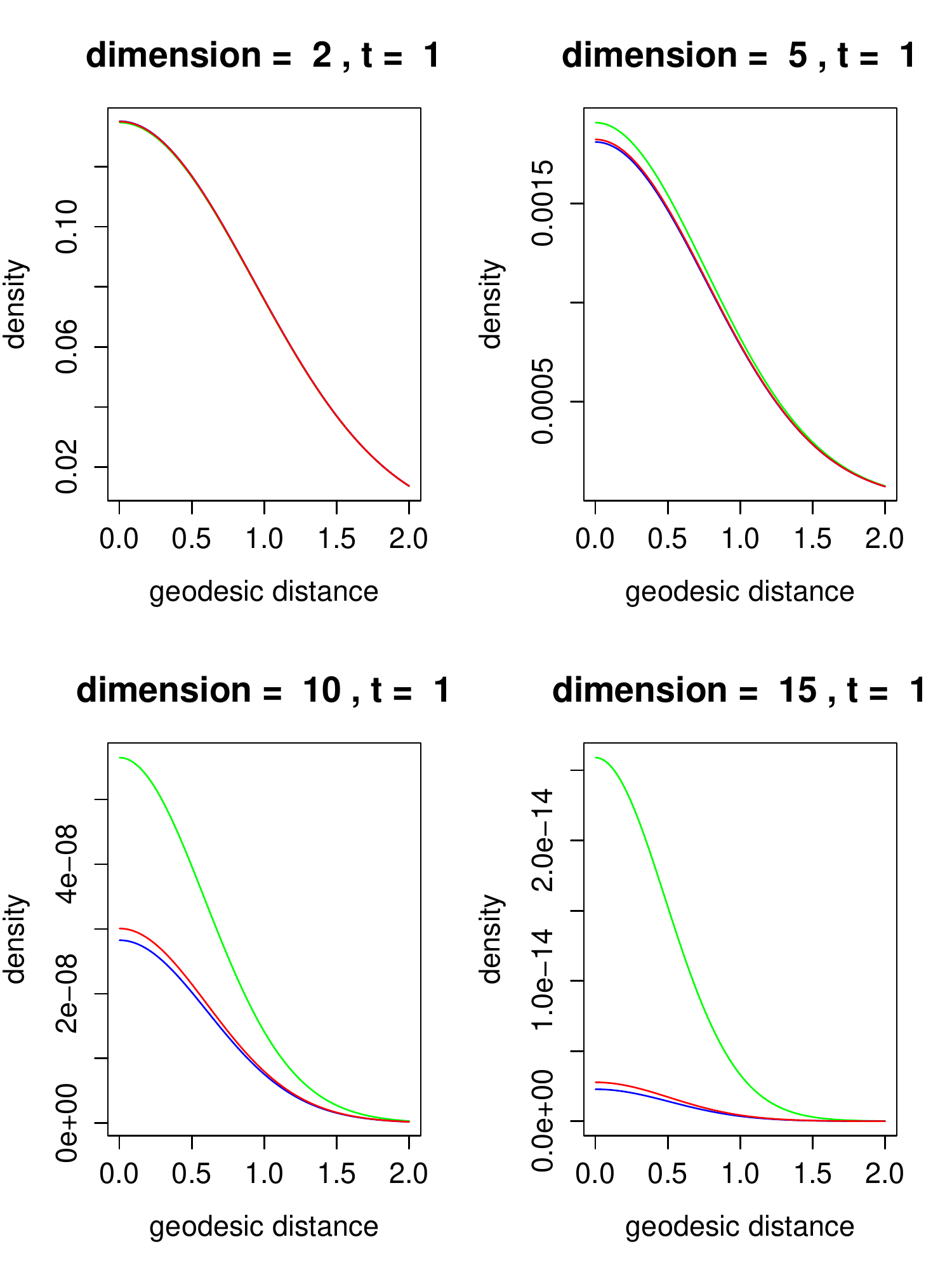} 
\end{center}
\caption{Plots of hyperbolic heat kernel at time 1 in various dimensions. Approximation of $r_t$ in \eqref{eqn:small-time-exp} by straight line in green, by the unbiasd estimator \eqref{eqn:rt} in blue. Gruet's formula in red.}
\label{fig:fig1}
\end{figure}

\subsection{Transition density of hyperbolic Bessel process}
The radial part $R_t$ of hyperbolic Brownian motion satisfying \eqref{eqn: R-process} is also referred to as the hyperbolic Bessel process. Hyperbolic Bessel processes and the calculations of their related moments are extensively explored in recent papers \cite{j-w} and \cite{pyc-zak}. 
By the same token as in Theorem \ref{thm:bessel-bridge}, we may as well derive a Bessel bridge representation for the hyperbolic Bessel process. The advantage of the bridge representation is that the expression is consistent across dimensions. However, formulas given in \cite{pyc-zak} (see Theorem 3.3), obtained by applying the Millson's recursion formula, become more and more intractable as dimension goes higher. 

\begin{theorem} \label{thm:hyperbolic-bessel}
The transition density $p_{HB}(T,x,y)$ of the hyperbolic Bessel process $R_t$ of order $n$ from $x$ to $y$ has the following Bessel bridge representation. For $T > 0$ and $x \geq 0$, $y > 0$, 
\bea
&& p_{HB}(T,x,y) \label{eqn:hyperbolic-bessel} \\
&=& e^{- \frac{(n-1)^2T}8} \left\{ \frac{\sinh(y)}{\sinh(x)} \right\}^{\frac{n-1}2} \tE_x\left[ \left. e^{- \frac{(n-1)(n-3)}8 \int_0^T \left[\frac1{\sinh^2(R_t)} - \frac1{R_t^2} \right] dt} \right| R_T = y \right] \times \nonumber \\
&& \quad \frac{e^{-\frac{x^2 + y^2}{2T}}}{\sqrt{2\pi T}}\left(\frac{xy}T\right)^{\frac{n-1}2} \frac1{2^{\frac{n-1}2} \Gamma(\frac{n-1}2)}\int_0
^\pi e^{\frac{xy}T \cos(\xi)} \sin^{n - 2}(\xi) d\xi. \nonumber
\eea
\end{theorem}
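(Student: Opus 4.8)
The plan is to mirror the proof of Theorem~\ref{thm:bessel-bridge} almost verbatim, the only structural novelty being that the radial process now starts at a general point $R_0 = x \geq 0$ rather than at the origin. First I would fix an arbitrary bounded measurable $f$ and write $\E_x[f(R_T)] = \int_0^\infty f(y)\, p_{HB}(T,x,y)\, dy$, the subscript recording the initial condition $R_0 = x$. Using exactly the Radon--Nikodym derivative \eqref{eqn:RN-r} with $h(r) = \frac{n-1}2\left[\frac1r - \coth(r)\right]$, Girsanov's theorem again turns $R_t$ into a Bessel process of order $n$ under $\tP$, now started at $x$; boundedness of $h$ guarantees the change of measure is legitimate. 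Writing $\E_x[f(R_T)] = \tEof{f(R_T)\, \frac{d\P}{d\tP}}$ and applying Ito's formula to the antiderivative $H(r) = \frac{n-1}2\ln\!\left(\frac{r}{\sinh r}\right)$ replaces the stochastic integral $\int_0^T h(R_t)\,d\tW_t$ by the boundary term $H(R_T) - H(x)$ plus the very same Riemann integral as before.

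The one genuinely new feature appears here: unlike the origin case, where $H(0)=0$ silently dropped out, the lower boundary term $H(x)$ now survives and contributes a factor $\left(\frac{x}{\sinh x}\right)^{\frac{n-1}2}$. Collecting the contributions of $H(R_T)$ and $H(x)$ produces the prefactor $\left\{\frac{\sinh y}{\sinh x}\right\}^{\frac{n-1}2}$ together with a leftover power $\left(\frac{x}{y}\right)^{\frac{n-1}2}$, while the Riemann integral reproduces the exponential functional and the $e^{-(n-1)^2 T/8}$ term exactly as in \eqref{eqn:bessel-bridge}. I would thus arrive at $\E_x[f(R_T)] = e^{-(n-1)^2 T/8}\left(\frac{x}{\sinh x}\right)^{\frac{n-1}2}\tEof{f(R_T)\left(\frac{\sinh R_T}{R_T}\right)^{\frac{n-1}2} e^{-\frac{(n-1)(n-3)}8\int_0^T[\frac1{\sinh^2 R_t} - \frac1{R_t^2}]\,dt}}$.

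Next I would invoke the explicit transition density of the order-$n$ Bessel process under $\tP$, namely $q(T,x,y) = \frac{y}{T}\left(\frac{y}{x}\right)^{\nu} e^{-\frac{x^2+y^2}{2T}} I_\nu\!\left(\frac{xy}{T}\right)$ with index $\nu = \frac{n-2}2$, where $I_\nu$ is the modified Bessel function of the first kind; this is the $x>0$ analogue of the density $p_B$ used in Theorem~\ref{thm:bessel-bridge} and reduces to it as $x \to 0$. Conditioning on $R_T = y$ and using that $f$ is arbitrary, I read off $p_{HB}(T,x,y)$ as the product of the prefactor $\left\{\frac{\sinh y}{\sinh x}\right\}^{\frac{n-1}2}$, the Bessel-bridge conditional expectation $\tE_x[\,\cdot\mid R_T=y]$, and the factor $\left(\frac{x}{y}\right)^{\frac{n-1}2} q(T,x,y)$. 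The final move is purely computational: substitute the Poisson integral representation $I_\nu(z) = \frac{(z/2)^\nu}{\sqrt\pi\,\Gamma(\nu+\frac12)}\int_0^\pi e^{z\cos\xi}\sin^{2\nu}\xi\,d\xi$ with $2\nu = n-2$ and $\nu+\frac12 = \frac{n-1}2$, then regroup the powers of $\frac{xy}T$, the factors of $2$, and the Gamma functions to land on the stated $\int_0^\pi e^{\frac{xy}T\cos\xi}\sin^{n-2}\xi\,d\xi$ form.

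I expect the main obstacle to be bookkeeping rather than conceptual: reconciling the several competing powers of $x$, $y$, $T$ and $2$ emerging from the boundary term $H(x)$, from the Bessel density $q$, and from the Bessel-function integral representation, so that they collapse to exactly the constants and the $\left\{\sinh y/\sinh x\right\}^{(n-1)/2}$ prefactor in the statement. A subsidiary technical point to justify with care is the legitimacy of passing to the regular conditional law (the Bessel bridge) in the conditioning step, together with the restriction $y>0$ that is needed both for the $I_\nu$ representation and for the $\sinh x$ appearing in the denominator when $x>0$.
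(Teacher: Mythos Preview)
Your proposal is correct and follows essentially the same route as the paper's proof: Girsanov with the same drift $h$, Ito's formula on $H(r)=\frac{n-1}2\ln\frac{r}{\sinh r}$ (with the new boundary term $H(x)$ surviving exactly as you note), conditioning against the Bessel transition density $\frac{y}{T}(y/x)^\nu e^{-(x^2+y^2)/2T}I_\nu(xy/T)$ with $\nu=\frac{n}{2}-1$, and finally the Poisson integral for $I_\nu$. The bookkeeping you anticipate is the only work, and the paper handles it in precisely the way you describe.
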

\begin{proof}
As in the proof of Theorem \ref{thm:bessel-bridge}, for any bounded measurable function $f$, the expectation of $f(R_t)$ conditioned on $R_0$ can be written as 
\beaa
&& \E[f(R_T)|R_0] \\
&=& e^{- \frac{(n-1)^2T}8} \tE_{R_0}\left[ f(R_T) \left\{ \frac{\sinh(R_T)}{R_T} \frac{R_0}{\sinh(R_0)} \right\}^{\frac{n-1}2} e^{- \frac{(n-1)(n-3)}8 \int_0^T \left[\frac1{\sinh^2(R_t)} - \frac1{R_t^2} \right] dt} \right],
\eeaa
where $\tE[\cdot]$ is the expectation in the $\tP$-measure defined in \eqref{eqn:RN-r}, under which $R_t$ is a Bessel process of order $n$. Recall that the transition density $p_B(t,x,y)$ of Bessel process of order $n$ from $x$ to $y$ in time $t$ is given by
\[
p_B(t,x,y) = \left\{\begin{array}{ll}
\frac1t \left(\frac yx\right)^\nu y e^{-\frac{x^2 + y^2}{2t}} I_\nu \left(\frac{xy}t \right) & \mbox{ if } x \neq 0; \\
& \\
\frac{2 y^{n-1} e^{-\frac{y^2}{2t}}}{(2t)^{\frac n2}\Gamma\left(\frac n2\right)} & \mbox{ if } x = 0.
\end{array}\right.
\]
where $\nu = \frac n2 - 1$. Hence, the transition density $p_{HB}(t, x, y)$ (from $x$ to $y$ in time $t$) for the hyperbolic Bessel process (i.e., $R_t$ in the $\P$-measure) has the representation
\beaa
&& p_{HB}(T, x, y) \\
&=& e^{- \frac{(n-1)^2T}8} \left\{ \frac{\sinh(y)}y \frac{x}{\sinh(x)} \right\}^{\frac{n-1}2} \tE_x \left[\left. e^{- \frac{(n-1)(n-3)}8 \int_0^T \left[\frac1{\sinh^2(R_t)} - \frac1{R_t^2} \right] dt} \right|R_T = y\right] \times \\
&& \quad \frac1T \left(\frac yx\right)^\nu y e^{-\frac{x^2 + y^2}{2T}} I_\nu \left(\frac{xy}T \right) \\
&=& e^{- \frac{(n-1)^2T}8} \left\{ \frac{\sinh(y)}{\sinh(x)} \right\}^{\frac{n-1}2} \tE_x \left[\left. e^{- \frac{(n-1)(n-3)}8 \int_0^T \left[\frac1{\sinh^2(R_t)} - \frac1{R_t^2} \right] dt} \right| R_T = y\right] \times \\
&& \quad \frac{e^{-\frac{x^2 + y^2}{2T}}}{\sqrt{2\pi T}}\left(\frac{xy}T\right)^{\frac{n-1}2} \frac1{2^{\frac{n-1}2} \Gamma(\frac{n-1}2)}\int_0
^\pi e^{\frac{xy}T \cos(\xi)} \sin^{n - 2}(\xi) d\xi,
\eeaa
where in the last equality we used the following integral representation for the modified Bessel function $I_\nu$
\[
I_{\nu}(z) = \frac{z^\nu}{2^{\nu}\sqrt\pi\,\Gamma(\nu + \frac12)} \int_0^\pi e^{z \cos(\xi)} \sin^{2\nu}(\xi) d\xi.
\]
\end{proof}
In particular, when $n=3$, \eqref{eqn:hyperbolic-bessel} can be expressed in elementary functions as 
\beaa
&& p_{HB}(t,x,y) \\
&=& e^{- \frac t2}\, \frac{\sinh(y)}{\sinh(x)} \, \frac{e^{-\frac{x^2 + y^2}{2t}}}{\sqrt{2\pi t}}\, \frac{xy}t \, \frac12 \int_0^\pi e^{\frac{xy}t \cos(\xi)} \sin(\xi) d\xi \\
&=& \frac{e^{- \frac t2}}{\sqrt{2\pi t}} \frac{\sinh(y)}{\sinh(x)} e^{-\frac{x^2 + y^2}{2t}} \left(e^{\frac{xy}t} - e^{-\frac{xy}t}\right) \\
&=& \frac{e^{- \frac t2}}{\sqrt{2\pi t}} \frac{\sinh(y)}{\sinh(x)} \left(e^{-\frac{(x - y)^2}{2t}} - e^{-\frac{(x + y)^2}{2t}}\right)
\eeaa
which coincides with the formula in \cite{pyc-zak}. We summarize the result in the following corollary. 

\begin{cor}
The transition density $p_{HB}$ of the hyperbolic Bessel process $R_t$ of order $3$ has the following closed form expression. For $t > 0$ and $x \geq 0$, $y > 0$, 
\beaa
&& p_{HB}(t,x,y) = \frac{e^{- \frac t2}}{\sqrt{2\pi t}} \frac{\sinh(y)}{\sinh(x)} \left(e^{-\frac{(x - y)^2}{2t}} - e^{-\frac{(x + y)^2}{2t}}\right).
\eeaa
\end{cor}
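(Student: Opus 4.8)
The plan is to derive the corollary as the $n=3$ specialization of the Bessel bridge representation in Theorem \ref{thm:hyperbolic-bessel}. The decisive structural observation, which I would isolate first, is that the coefficient $\frac{(n-1)(n-3)}{8}$ multiplying the path integral in \eqref{eqn:hyperbolic-bessel} vanishes identically at $n=3$. Consequently the random path functional disappears, the conditional expectation $\tE_x[\,\cdot\mid R_T=y]$ collapses to the expectation of the constant $1$, and the representation becomes genuinely closed form. This is exactly the mechanism responsible for $\H^3$ (and its radial part) admitting an elementary kernel, and it is what makes the corollary more than a formal restatement.

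With $n=3$ fixed, I would evaluate the surviving factors one at a time. Here $e^{-\frac{(n-1)^2T}{8}}=e^{-\frac{T}{2}}$, the ratio $\{\sinh(y)/\sinh(x)\}^{(n-1)/2}$ reduces to $\sinh(y)/\sinh(x)$, and the order of the underlying Bessel process is $\nu=\frac n2-1=\frac12$. Rather than route through the angular integral, I would work from the intermediate representation appearing in the proof of Theorem \ref{thm:hyperbolic-bessel}, namely the one written in terms of the Bessel transition density $p_B$ and the modified Bessel function $I_\nu$, and substitute the elementary half-integer value $I_{1/2}(z)=\sqrt{2/(\pi z)}\,\sinh(z)$. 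Equivalently, one may evaluate the angular integral $\int_0^\pi e^{\frac{xy}{T}\cos(\xi)}\sin(\xi)\,d\xi$ directly via the substitution $u=\cos(\xi)$, obtaining $\frac{2T}{xy}\sinh\!\bigl(\frac{xy}{T}\bigr)$. Either way the entire Bessel part becomes elementary and the powers of $xy/T$ cancel against the $T/(xy)$ produced by the Bessel evaluation.

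The final step is to merge the Gaussian factor with the hyperbolic sine by completing the square, using $e^{-\frac{x^2+y^2}{2T}}\bigl(e^{xy/T}-e^{-xy/T}\bigr)=e^{-\frac{(x-y)^2}{2T}}-e^{-\frac{(x+y)^2}{2T}}$, and then collecting the overall constant into $\frac{e^{-T/2}}{\sqrt{2\pi T}}$. This produces the asserted expression, which agrees with the formula of \cite{pyc-zak}.

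There is no genuine analytic difficulty once the exponent is seen to vanish; the proof is essentially a bookkeeping exercise. The single point deserving care is the tracking of the multiplicative constants — the interplay among the factor coming from $\frac{1}{2^{(n-1)/2}\Gamma(\frac{n-1}{2})}$, the $\frac12$ implicit in $\sinh(z)=\frac12(e^{z}-e^{-z})$, and the $\sqrt2$ inside $\sqrt{2\pi T}$ — all of which must combine into the normalization $1/\sqrt{2\pi T}$. To guard against an off-by-a-factor-of-two slip I would cross-check the normalization by the direct $I_{1/2}$ evaluation, which pins down the constant unambiguously.
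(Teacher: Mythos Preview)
Your proposal is correct and follows essentially the same approach as the paper: specialize \eqref{eqn:hyperbolic-bessel} to $n=3$, observe that $(n-1)(n-3)=0$ kills the path functional so the conditional expectation is $1$, evaluate the remaining angular integral (the paper does this directly via the substitution $u=\cos\xi$, which you list as one of your two equivalent options), and then complete the square to combine the Gaussian with the $\sinh$ term. The only cosmetic difference is that you also offer the shortcut through $I_{1/2}(z)=\sqrt{2/(\pi z)}\,\sinh z$, which the paper does not invoke explicitly but which is of course equivalent.
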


Notice that, since the conditional expectation term in \eqref{eqn:hyperbolic-bessel} is exactly the same as the one in \eqref{eqn:bessel-bridge}, one can easily derive series and small time asymptotic expansions for the transition density of hyperbolic Bessel process, similarly as the ones in Corollaries \ref{cor:series} and \ref{cor:small-time}. For example, we have
\begin{cor} 
As $T \to 0^+$, the transition density $p_{HB}$ of hyperbolic Bessel process has the following small time asymptotic expansion up to second order
\beaa
&& p_{HB}(T,x,y) \\
&=& e^{- \frac{(n-1)^2T}8} \left\{ \frac{\sinh(y)}{\sinh(x)} \right\}^{\frac{n-1}2} e^{- \frac{(n-1)(n-3)}8 \int_0^T \left[\frac1{\sinh^2(r_t)} - \frac1{r_t^2} \right] dt}  \times \nonumber \\
&& \quad \frac{e^{-\frac{x^2 + y^2}{2T}}}{\sqrt{2\pi T}}\left(\frac{xy}T\right)^{\frac{n-1}2} \frac1{2^{\frac{n-1}2} \Gamma(\frac{n-1}2)}\int_0
^\pi e^{\frac{xy}T \cos(\xi)} \sin^{n - 2}(\xi) d\xi \; \left\{ 1 + O(T^2) \right\}. \nonumber
\eeaa
where $r_t$ is given in \eqref{eqn:rt}.
\end{cor}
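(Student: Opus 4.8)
The plan is to transplant the argument behind Corollaries \ref{cor:series} and \ref{cor:small-time} onto the Bessel bridge representation \eqref{eqn:hyperbolic-bessel}. The key point is that every prefactor in \eqref{eqn:hyperbolic-bessel} --- namely $e^{-(n-1)^2T/8}$, the ratio $\{\sinh(y)/\sinh(x)\}^{(n-1)/2}$, the Gaussian-type factor, and the modified-Bessel integral --- is deterministic and already appears unchanged on the right-hand side of the asserted expansion. The only object that must be resolved to second order in $T$ is the conditional expectation
\[
\tE_x\left[\left. e^{\int_0^T g(R_t)\,dt}\right| R_T = y\right],
\qquad
g(r)=-\frac{(n-1)(n-3)}8\left[\frac1{\sinh^2 r}-\frac1{r^2}\right],
\]
which is formally identical to the one treated in Corollary \ref{cor:small-time}, the only difference being that the underlying Bessel bridge now runs from $x$ to $y$ instead of from $0$ to $r$.

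First I would introduce the deterministic comparison path $r_t$, taken to be the natural $x$-to-$y$ bridge analogue of \eqref{eqn:rt}, i.e.\ the solution of $g(r_t)=\tE_x[g(R_t)\mid R_T=y]$, and factor $e^{\int_0^T g(r_t)\,dt}$ out of the conditional expectation. Expanding the residual exponential in a power series and interchanging the sum with the conditional expectation --- justified by dominated convergence, since $g$ is bounded and hence $|\int_0^T\{g(R_t)-g(r_t)\}\,dt|\le \frac{(n-1)(n-3)}{12}T$ almost surely, exactly as in the proof of Corollary \ref{cor:series} --- I would obtain
\[
\tE_x\left[\left. e^{\int_0^T g(R_t)dt}\right| R_T=y\right]
= e^{\int_0^T g(r_t)dt}\sum_{k=0}^\infty \frac{T^k}{k!}\,
\tE_x\left[\left.\left(\int_0^1\{g(R_{Ts})-g(r_{Ts})\}ds\right)^{k}\right| R_T=y\right].
\]

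Next I would read off the low-order terms. The $k=0$ term contributes $1$; the $k=1$ term equals $T\,\tE_x[\int_0^1\{g(R_{Tu})-g(r_{Tu})\}du\mid R_T=y]$, which vanishes identically by the defining property of $r_t$ as an unbiased estimator of $g(R_t)$ in the bridge measure; and each term with $k\ge 2$ is bounded by $T^k/k!$ times a constant, hence is $O(T^2)$. Consequently the conditional expectation equals $e^{\int_0^T g(r_t)dt}\{1+O(T^2)\}$, and substituting this back into \eqref{eqn:hyperbolic-bessel}, together with the identity $e^{\int_0^T g(r_t)dt}=e^{-\frac{(n-1)(n-3)}8\int_0^T[\frac1{\sinh^2(r_t)}-\frac1{r_t^2}]dt}$, yields precisely the claimed formula.

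The step I expect to require the most care is the well-posedness of the comparison path $r_t$ in the $x$-to-$y$ bridge measure: one must verify that $\tE_x[g(R_t)\mid R_T=y]$ lands in the range of $g$ so that $g^{-1}$ may legitimately be applied. Since $g$ is continuous and strictly monotone on $(0,\infty)$ with bounded range (the degenerate case $n=3$ giving $g\equiv 0$, for which the expansion is trivially exact), and since the bridge $R_t$ is strictly positive for $t\in(0,T)$ when $y>0$, the conditional mean of $g(R_t)$ is an average of interior values of $g$ and hence lies in the interior of that range; the boundary behaviour as $t\to 0^+$ or when $x=0$ is handled by the same limiting considerations already implicit in Corollary \ref{cor:small-time}.
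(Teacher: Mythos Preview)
Your proposal is correct and follows exactly the approach the paper intends: the text preceding the corollary observes that the conditional expectation in \eqref{eqn:hyperbolic-bessel} has the same form as in \eqref{eqn:bessel-bridge}, so the arguments of Corollaries \ref{cor:series} and \ref{cor:small-time} carry over verbatim, and the paper accordingly states the result without proof. Your care in adapting the definition of $r_t$ to the $x$-to-$y$ bridge and in checking its well-posedness goes slightly beyond what the paper makes explicit, but the underlying method is identical.
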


%
%

\section{Bridge representation in radially symmetric spaces}
Let $M$ be a radially symmetric space that is also Cartan-Hadamard. We recall that a Cartan-Hadamard manifold is a negatively curved, complete and simply connected Riemannian manifold whose exponential map at any given point defines a global diffeomorphism. Consequently, the geodesic polar coordinates at pole are globally defined for such manifolds. We refer the reader to \cite{eltonbook} and the references therein for more detailed discussions. In geodesic polar coordinates, the Riemann metric $ds^2$ and the Laplace-Beltrami operator $\Delta_M$ on $M$ can be written respectively as
\bea
&& ds^2 = dr^2 + G^2(r) d\theta^2, \label{eqn:radial-sym-metric} \\
&& \Delta_M = \p_r^2 + (n-1) \frac{G'(r)}{G(r)} \p_r + \frac1{G^2(r)} \Delta_{S^{n-1}}, \label{eqn:radial-sym-LBop}
\eea
where $r$ is the geodesic distance and, as before, $d\theta^2$ denotes the standard Riemann metric over the unit sphere $S^{n-1}$. The radial function $G$ is nonnegative and satisfies $G(0) = 0$ and $G'(0) = 1$. Conceivably due to symmetry, heat kernel on such spaces has analogous Bessel bridge representation as for the hyperbolic space with minor modifications. We present the representation in the following theorem but omit its proof since it is almost identical with the proof of Theorem \ref{thm:bessel-bridge}.  
\begin{theorem}(Bessel bridge representation in radially symmetric space) \label{thm:bessel-bridge-radial-sym} \\ 
Let $M$ be an $n$-dimensional Cartan-Hadamard radial symmetric space with Riemann metric and Laplace-Beltrami operator given by \eqref{eqn:radial-sym-metric} and \eqref{eqn:radial-sym-LBop} respectively at its pole $z \in M$. Further assume that $G$ satifies the regularity condition $\left|\frac{d}{dr}\ln\frac{G(r)}r \right| \leq C$ for some $C > 0$. Then, for $w \in M$, the heat kernel $p_M(T,z,w)$ has the following representation
\bea
&& p_M(T,z,w) \nonumber \\
&=& \left\{\frac{r}{G(r)} \right\}^{\frac{n-1}2} \frac{e^{-\frac{r^2}{2T}}}{(2\pi T)^{\frac n2}} \E_r \left[ e^{\int_0^T \left(\frac{(n-1)(n-3)}8 \left\{\frac1{R_t^2} - \left(\frac{G'(R_t)}{G(R_t)} \right)^2 \right\} - \frac{n-1}4 \frac{G''(R_t)}{G(R_t)} \right)dt} \right], \label{eqn:bessel-bridge-radial-sym}
\eea
where $r = r(z,w)$ is the geodesic distance between $z$ and $w$. $\E_r[\cdot]$ denotes the Bessel bridge measure, i.e., the conditional expectation $\E[\cdot|R_T = r]$.  
\end{theorem}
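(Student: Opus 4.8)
The plan is to reproduce the proof of Theorem~\ref{thm:bessel-bridge} almost verbatim, systematically replacing $\sinh r$ by $G(r)$ and $\coth r$ by $G'(r)/G(r)$. First I would let $R_t$ be the radial part of the Brownian motion generated by $\frac12\Delta_M$ in \eqref{eqn:radial-sym-LBop}, so that $dR_t = dW_t + \frac{n-1}2\frac{G'(R_t)}{G(R_t)}\,dt$, started at $R_0 = 0$. For a bounded measurable radial $f$ I would write
\[
\E[f(R_T)] = \int_{S^{n-1}}\int_0^\infty f(r)\,p_M(T,r)\,G^{n-1}(r)\,dr\,d\omega,
\]
the volume element now being $G^{n-1}(r)\,dr\,d\omega$. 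Next I would introduce $\tP$ via the Radon--Nikodym derivative \eqref{eqn:RN-r} with
\[
h(r) = \frac{n-1}2\left[\frac1r - \frac{G'(r)}{G(r)}\right] = \frac{n-1}2\,\frac{d}{dr}\ln\frac{r}{G(r)}.
\]
Here is exactly where the hypothesis $\left|\frac{d}{dr}\ln\frac{G(r)}r\right|\le C$ is used: it yields $|h|\le\frac{n-1}2C$, so $h$ is bounded, Novikov's condition holds, and Girsanov applies. Under $\tP$ the drift of $R_t$ becomes $\frac{n-1}2\frac{1}{R_t}$, i.e.\ $R_t$ is a Bessel process of order $n$.

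The second step is to trade the stochastic integral for a Riemann integral. Taking the antiderivative $H(r) = \frac{n-1}2\ln\frac{r}{G(r)}$ of $h$, and noting that $G(0)=0$, $G'(0)=1$ give $G(r)/r\to1$ and hence $H(0)=0$, Ito's formula under $\tP$ produces
\[
\int_0^T h(R_t)\,d\tW_t = H(R_T) - \int_0^T\left[\frac{h'(R_t)}2 + \frac{n-1}2\frac{h(R_t)}{R_t}\right]dt.
\]
The one genuinely new computation is to simplify $\frac{h'}2 + \frac{n-1}2\frac{h}{r} - \frac{h^2}2$, the integrand that survives in the exponent of $d\P/d\tP$. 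A direct differentiation and collection of terms collapses it exactly to
\[
\frac{(n-1)(n-3)}8\left[\frac1{r^2} - \left(\frac{G'(r)}{G(r)}\right)^2\right] - \frac{n-1}4\,\frac{G''(r)}{G(r)},
\]
which is precisely the integrand appearing in \eqref{eqn:bessel-bridge-radial-sym}. I would remark that in the hyperbolic case $G''/G = 1$ and $\coth^2 r - 1/\sinh^2 r = 1$, so a constant $-(n-1)^2/8$ splits off and integrates to the extra factor $e^{-(n-1)^2T/8}$ seen in \eqref{eqn:bessel-bridge}; for general $G$ no such constant factors out, consistent with the absence of that prefactor in \eqref{eqn:bessel-bridge-radial-sym}.

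Finally I would assemble the pieces exactly as in Theorem~\ref{thm:bessel-bridge}: substitute the order-$n$ Bessel density $p_B(T,r) = \frac{2r^{n-1}e^{-r^2/2T}}{(2T)^{n/2}\Gamma(n/2)}$, use the volume $\frac{2\pi^{n/2}}{\Gamma(n/2)}$ of $S^{n-1}$ to handle the angular integral, and condition on $R_T=r$ to replace $\tE[\cdot]$ by the Bessel bridge expectation $\E_r[\cdot]=\tE[\cdot\mid R_T=r]$. Matching the resulting integrand against the arbitrary radial $f$ then reads off \eqref{eqn:bessel-bridge-radial-sym}, since $\frac{1}{G^{n-1}(r)}\bigl(\frac{G(r)}{r}\bigr)^{(n-1)/2}\cdot r^{n-1} = \bigl(\frac{r}{G(r)}\bigr)^{(n-1)/2}$. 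The main obstacle is not the algebra but the analytic bookkeeping at the pole: one must verify that $h$ is bounded (the role of the regularity hypothesis), that $H(0)=0$ so no boundary term is dropped, and that the reciprocal-moment time integrals of the Bessel process are integrable enough for Ito's formula and the conditioning to be rigorous---each point mirroring the corresponding one in the proof of Theorem~\ref{thm:bessel-bridge}.
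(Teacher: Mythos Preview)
Your proposal is correct and is precisely the argument the paper has in mind: the paper explicitly omits the proof, saying it is ``almost identical with the proof of Theorem~\ref{thm:bessel-bridge}'' with the obvious substitution of $G(r)$ for $\sinh r$, which is exactly the route you outline. Your identification of where the regularity hypothesis enters (boundedness of $h$ so that Girsanov applies) and your verification that the exponent collapses to the displayed integrand are both on target.
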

\begin{remark}
Similarly, in three dimensional case, $n=3$, the representation \eqref{eqn:bessel-bridge-radial-sym} has the following slightly simpler form
\[
p(T,z,w)
= \frac{r}{G(r)} \frac{e^{-\frac{r^2}{2T}}}{(2\pi T)^{\frac 32}} \tE_{r} \left[ e^{-\frac12 \int_0^T  \frac{G''(R_t)}{G(R_t)} dt} \right],
\]
where again $\E_r[\cdot]$ denotes the expectation under Bessel bridge measure. Apparently, it recovers $p_{\H^3}$ in \eqref{eqn:hyp-ht-kern-3d} by setting $G(r) = \sinh r$. 
\end{remark}
Finally, a direct application of \eqref{eqn:bessel-bridge-radial-sym} is the following expansion in small time of the heat kernel on radially symmetry space:
\begin{cor}
As $T \to 0^+$,
\beaa
&& p_M(T,z,w) = \left\{\frac{r}{G(r)} \right\}^{\frac{n-1}2} \frac{e^{-\frac{r^2}{2T}}}{(2\pi T)^{\frac n2}} \times \\
&& \quad \left\{ 1 + \int_0^T \left(\frac{(n-1)(n-3)}8 \E_r\left[ \frac1{R_t^2} - \left(\frac{G'(R_t)}{G(R_t)} \right)^2 \right] - \frac{n-1}4 \E_r\left[\frac{G''(R_t)}{G(R_t)}\right] \right)dt + O(T^2)\right\},
\eeaa
where $R_t$ is a Bessel bridge of order $n$ connecting 0 and $r$ in time $T$.
\end{cor}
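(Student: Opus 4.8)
The plan is to start from the Bessel bridge representation \eqref{eqn:bessel-bridge-radial-sym} of Theorem \ref{thm:bessel-bridge-radial-sym} and expand the bridge expectation in powers of $T$, exactly as in the proof of Corollary \ref{cor:small-time}. To lighten notation, write the integrand occurring in the exponent of \eqref{eqn:bessel-bridge-radial-sym} as
\[
\Psi(x) := \frac{(n-1)(n-3)}8 \left\{ \frac1{x^2} - \left( \frac{G'(x)}{G(x)} \right)^2 \right\} - \frac{n-1}4 \, \frac{G''(x)}{G(x)},
\]
so that \eqref{eqn:bessel-bridge-radial-sym} becomes $p_M(T,z,w) = \left\{ r/G(r) \right\}^{(n-1)/2} (2\pi T)^{-n/2} e^{-r^2/(2T)} \, \E_r\!\left[ e^{\int_0^T \Psi(R_t)\,dt} \right]$. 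Since the prefactor is deterministic, the whole statement reduces to proving the bridge expectation expansion
\[
\E_r\!\left[ e^{\int_0^T \Psi(R_t)\,dt} \right] = 1 + \int_0^T \E_r[\Psi(R_t)] \, dt + O(T^2), \qquad T \to 0^+.
\]

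First I would expand the exponential in its power series and integrate term by term against the Bessel bridge measure, giving $\E_r[ e^{\int_0^T \Psi(R_t)\,dt} ] = \sum_{k \geq 0} \frac1{k!} \E_r[ ( \int_0^T \Psi(R_t)\,dt )^k ]$. The $k=0$ term equals $1$, and by Tonelli the $k=1$ term equals $\int_0^T \E_r[\Psi(R_t)]\,dt$, which is precisely the first order correction displayed in the corollary once the expectation is distributed over the two summands of $\Psi$. It then remains to absorb the tail $\sum_{k\geq2}$ into $O(T^2)$. Assuming a uniform bound $|\Psi| \leq K$ on $(0,\infty)$, one has $\left| \int_0^T \Psi(R_t)\,dt \right| \leq KT$ almost surely along the bridge, so that
\[
\left| \sum_{k \geq 2} \frac1{k!} \, \E_r\!\left[ \left( \int_0^T \Psi(R_t)\,dt \right)^k \right] \right| \leq \sum_{k \geq 2} \frac{(KT)^k}{k!} = e^{KT} - 1 - KT = O(T^2),
\]
and this same domination by the constant $e^{KT}$ justifies the term-by-term integration through dominated convergence. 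Collecting the three contributions yields the displayed expansion, and reinstating the prefactor finishes the proof.

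The hard part is establishing the uniform bound $|\Psi| \leq K$, which is exactly where the hypotheses on $G$ must be used, and the difficulty lies entirely in the endpoint behavior at $x = 0$ and $x = \infty$. For the first bracket, setting $\eta(r) := \frac{d}{dr}\ln\frac{G(r)}{r} = G'(r)/G(r) - 1/r$ one computes $\frac1{r^2} - (G'/G)^2 = -2\eta(r)/r - \eta(r)^2$; the regularity hypothesis $|\eta| \leq C$ controls $\eta^2$ everywhere and makes $2\eta(r)/r \to 0$ as $r \to \infty$, while the smoothness of the metric at the pole (which forces $G(r) = r + O(r^3)$, hence $\eta(r) = O(r)$ near $0$) keeps $2\eta(r)/r$ bounded as $r \to 0^+$. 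The genuinely delicate term is $G''(x)/G(x)$, which is the radial curvature of $M$ up to sign: near the pole it tends to a finite limit by smoothness, but its boundedness as $x \to \infty$ is not implied by the control on $\eta$ alone and must be secured from the Cartan--Hadamard/bounded-geometry structure of $M$ (in the hyperbolic prototype $G = \sinh r$ it is the constant $1$). Once $|\Psi| \leq K$ is in hand the remaining estimates are the routine ones above, so this endpoint analysis of $\Psi$ is the sole obstacle.
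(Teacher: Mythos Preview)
The paper gives no proof for this corollary, calling it merely ``a direct application'' of \eqref{eqn:bessel-bridge-radial-sym}; your Taylor expansion of the bridge expectation is exactly that direct application, so the approach is the intended one. Note that this differs from the proof of Corollary~\ref{cor:small-time}: there the authors center at the unbiased path $r_t$ so that the linear term vanishes and the explicit prefactor $e^{\int_0^T g(r_t)\,dt}$ absorbs the first-order contribution, whereas here the statement keeps the first-order term $\int_0^T \E_r[\Psi(R_t)]\,dt$ explicit, which is why your uncentered expansion is appropriate.

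Your identification of the genuine technical issue is accurate and is something the paper glosses over. The regularity hypothesis $|\eta|=\left|\frac{d}{dr}\ln\frac{G(r)}{r}\right|\le C$ in Theorem~\ref{thm:bessel-bridge-radial-sym} is imposed only to make the Girsanov density \eqref{eqn:RN-r} (with $h$ replaced by $\frac{n-1}{2}\eta$) a valid change of measure; it does \emph{not} by itself bound the radial curvature $G''/G$, and hence does not bound $\Psi$. So the $O(T^2)$ control of the tail, as stated, requires an additional bounded-curvature assumption on $M$ that the paper neither states nor proves. Your endpoint analysis of the first bracket via $\frac{1}{r^2}-(G'/G)^2=-2\eta/r-\eta^2$ is correct (with the smoothness of the metric at the pole supplying $\eta(r)=O(r)$ near $0$), and your diagnosis that $G''/G$ is the obstruction is exactly right. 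One minor quibble: the interchange in the $k=1$ term is Fubini (granted the bound $|\Psi|\le K$), not Tonelli, since $\Psi$ need not be signed.
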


%
%

\section*{Acknowledgement}
We thank the referee for a careful reading and comments of the manuscript. The authors are partially supported by the Natural Science Foundation of China grant 11601018. XC is also partially supported by the Natural Science Foundation of China grant 11471051.

%
%

\end{document}